\newcommand{\spa}{\operatorname{span}}
\newcommand{\ke}{\operatorname{ker}}
\newcommand{\I}{\operatorname{Image}}
\newcommand{\Pic}{\operatorname{Pic}}
\newcommand{\Ann}{\operatorname{Ann}}
\begin{document}

\newtheorem{teo}{Theorem}[section]
\newtheorem{lem}[teo]{Lemma}
\newtheorem{pro}[teo]{Proposition}
\newtheorem{cor}[teo]{Corollary}

\theoremstyle{definition}
\newtheorem{oss}[teo]{Remark}

\addcontentsline{}{}{}
\setcounter{secnumdepth}{-1}

\setcounter{secnumdepth}{4}

\title{A note on Griffiths infinitesimal invariant for curves} \author{Emanuele Raviolo}
\date{}
\maketitle

\begin{abstract} 
Given a generic curve of genus $g\geqslant4$ and a smooth point $L\in W_{g-1}^{1}(C)$, 
whose linear system is base-point free, we consider  the Abel-Jacobi normal function 
associated to $L^{\otimes 2}\otimes \omega_{C}^{-1}$, 
when $(C,L)$ varies in moduli. We prove that its infinitesimal invariant reconstructs the couple $(C,L)$. 
When $g=4$, we obtain the generic Torelli theorem proved by Griffiths. 
\end{abstract}

\section*{Introduction}\label{intro}
The infinitesimal invariant of normal function was introduced by Griffiths in \cite{grif}. 
In that paper he gave a beautiful application of it, which we briefly recall.
\\
If $C$ is a generic curve of genus $4$, then its canonical image is the intersection of the smooth quadric $Q$ with a cubic in $\mathbb{P}^{3}$. 
The rulings of $Q$ cut on the canonical curve two complete $g_{3}^{1}$'s, $|L|$ and $|L^{\prime}|$, which are adjoint, i.e. 
$L^{\prime}=\omega_{C}\otimes L^{-1}$. 
Since the difference $L\otimes (L^{'})^{-1}$ has degree zero, when $C$ varies in moduli we get a normal function $\nu$ given by the Abel-Jacobi map. 
Griffiths proved that the infinitesimal invariant of $\nu$ gives the equation of the canonical curve inside $Q$.
\\
The infinitesimal invariant was then studied by M. Green \cite{g} and C. Voisin \cite{voi}.
Green refined Griffiths' original idea defining a series of infinitesimal invariants and obtained a result on the Abel-Jacobi map
of odd-dimensional projective hypersurfaces of large degree.
Voisin instead gave a geometric interpretation of the infinitesimal invariant in terms of algebraic cycles.  
\\
A very interesting application of the infinitesimal invariant to the study of algebraic cycles is due to A. Collino and G.P. Pirola \cite{cp}. 
They computed the infinitesimal invariant of the Ceresa cycle $C-C^{-}$ of a curve in its Jacobian. 
As consequences, they reproved that $C-C^{-}$ is not algebraically trivial (originally proved by G. Ceresa \cite{ceresa}) 
and obtained a generic Torelli theorem for curves of genus $3$. 
In particular they showed that the infinitesimal invariant gives the equation of the canonical curve in the projective plane.
\\
The works mentioned above show that the infinitesimal invariant is a powerfull tool in the study of algebraic cycles and in Torelli-type
problems. It encodes in fact some trascendental and algebraic information at the same time.
\\
In this paper we give the following generalisation of Griffiths result for curves.
Let $C$ be a generic curve of genus $g\geqslant 4$. 
Let us consider a non singular point $L\in W_{g-1}^{1}(C)$ whose linear system is base-point free.  
Then also the adjoint bundle $\omega_{C}\otimes L^{-1}\in W_{g-1}^{1}(C)$ is a smooth point. 
The image of the Petri map $H^{0}(C,L)\otimes H^{0}(C,\omega_{C}\otimes L^{-1})\rightarrow H^{0}(C,\omega_{C})$ is a four dimensional vector subspace 
$V$ of $H^{0}(C,\omega_{C})$ whose linear system is base-point-free.
If $\phi:C\rightarrow\mathbb{P}V^{*}\cong\mathbb{P}^{3}$ is the induced holomorphic map, then $\phi(C)$ is contained in a quadric of rank four and is 
birational to $C$.
When we deform the couple $(C,L)$, the 0-degree line bundle $L^{\otimes 2}\otimes \omega_{C}^{-1}$ gives
 a normal function $\nu$ (see Section~\ref{ex}). 
\\
We show that the infinitesimal invariant of $\nu$ reconstructs the curve $\phi(C)$ 
and, as consequence, the couple $(C,L)$ (Corollary \ref{lb}).
\\
The paper consists of two sections. The first contains a quick review of the definition of the infinitesimal invariant 
and of the results by Griffiths and Voisin that we need in our computations.
In the second we compute the infinitesimal invariant of the normal function.

\section{The infinitesimal invariant for curves}\label{1}
In this section we recall the basic facts on the infinitesimal invariant of normal functions for curves. A great reference is \cite[chap. 7]{voi2}.
\\
Consider a smooth curve $C$ and its Kuranishi family $\pi:\mathcal{C}\rightarrow B$. We define $C_{t}=\pi^{-1}(t)$ and $C=C_{0}$ 
for a reference point $0\in B$. 
 We have the associated jacobian fibration 
 \[ j(\pi):\frac{\mathcal{H}}{\mathcal{F}+R^{1}\pi_{*}\mathbb{Z}}\rightarrow B, \] 
 where $\mathcal{H}$ and $\mathcal{F}$ are the holomorphic vector bundles over $B$ with fibers $\mathcal{H}_{t}=H^{1}(C_{t},\mathbb{C})$ 
 and $\mathcal{F}_{t}=H^{0}(C_{t},\omega_{C_{t}})$ over $t\in B$.
\\
Let $\mathcal{D}\subset\mathcal{C}$ be a curve such that for every $t\in B$ the intersection divisor $D_{t}=\mathcal{D}\cdot C_{t}$ 
has degree zero on $C_{t}$.  
Then we can define a normal function 
\[\nu:B\rightarrow \frac{\mathcal{H}}{\mathcal{F}+R^{1}\pi_{*}\mathbb{Z}}\] 
setting $\nu(t)=AJ_{C_{t}}(D_{t})$, where $AJ$ is the Abel-Jacobi map.
\\
Following Mark Green \cite{g} we define the infinitesimal invariant of $\nu$ in the following way.
The Gauss Manin connection $\nabla$ of $\mathcal{H}$ induces a morphism of vector bundles \[
\nabla: \mathcal{F}\rightarrow \mathcal{H}^{0,1}\otimes\Omega_{B}^{1},\]
 where $\mathcal{H}^{0,1}=\frac{\displaystyle{\mathcal{H}}}{\displaystyle{\mathcal{F}}}$ 
 (note that Serre duality induces an isomorphism \mbox{$\mathcal{H}^{0,1}\cong\mathcal{F}^{*}$}).
\\
If $\widetilde{\nu}:U\subset B\rightarrow \mathcal{H}$ is a local lifting of $\nu$, the class 
\[ [\nabla\widetilde{\nu}]\in\frac{\mathcal{H}^{0,1}\otimes\Omega^{1}_{U}}{\nabla\mathcal{F}}\] 
does not depend on the chosen lifting (see \cite{g}).
We denote by $\delta\nu$ this class and call it the infinitesimal invariant.
It is usefull to define also the dual version of $\delta\nu$. 
Let us consider the transpose of the Gauss-Manin,
$\nabla^{t}:\mathcal{F}\otimes T_{U}\rightarrow \mathcal{H}^{0,1}$. 
Choose a reference point \mbox{$0\in U$} and denote $C=C_{0}$. 
Then $\nabla^{t}:H^{0}(C,\omega_{C})\otimes H^{1}(C,T_{C})\rightarrow H^{1}(C,\mathcal{O}_{C})$ is given by
$\nabla^{t}(\sum_{i}\omega_{i}\otimes\xi_{i})=\sum_{i}\nabla_{\xi_{i}}\omega_{i}=\sum_{i}\xi\cdot\omega_{i}$.
By the duality $ \frac{\displaystyle{\mathcal{H}^{0,1}\otimes\Omega_{U}^{1}}}{\displaystyle{\nabla \mathcal{F}}} \cong (\ker(\nabla^{t}))^{*}$, 
we can consider $\delta\nu$ as an an element of this last vector space. Over the point $0$ we have:
\[\delta\nu(0)(\sum_{i}\omega_{i}\otimes \xi_{i})=\sum_{i}\int_{C}\nabla_{\xi_{i}}\widetilde{\nu}\wedge\omega_{i},\]
 where  $\sum_{i}\xi_{i}\cdot\omega_{i}=0$. This is Griffiths' definition of the infinitesimal invariant \cite{grif}.
The first tool in our computation will be the following
\begin{teo}\textup{\cite[pp. 292-293]{grif}}\label{calc} Let $\xi \otimes \omega\in H^{1}(C,T_{C})\otimes H^{0}(C,\omega_{C})$ be such that
 $\xi\cdot\omega=0$ in $H^{1}(C,\mathcal{O}_{C})$. Choose $h\in C^{\infty}(C)$ such that $\xi\cdot\omega=\overline{\partial}h$ 
and call $D_{0}=\sum_{i=1}^{l}p_{i}-q_{i}$.
\\
Then the number $\sum_{i=1}^{l}h(p_{i})-h(q_{i})$ depends only on $\xi\otimes\omega$ and we have 
\[
\delta\nu(0)(\xi\otimes\omega)=\sum_{i=1}^{l}h(p_{i})-h(q_{i}).
\]
\end{teo}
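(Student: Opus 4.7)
The plan is to apply directly Griffiths' formula
\[
\delta\nu(0)(\xi\otimes\omega)=\int_{C}\nabla_{\xi}\widetilde{\nu}\wedge\omega,
\]
which is valid when $\xi\cdot\omega=0$ in $H^{1}(C,\mathcal{O}_{C})$, after constructing a convenient $C^{\infty}$ lifting of the normal function. The first step would be to fix a $C^{\infty}$ trivialization $\mathcal{C}|_{U}\cong C\times U$ in which the sections of $\mathcal{D}$ corresponding to the points $p_{i},q_{i}$ are constant; the fiberwise complex structure then varies with $t\in U$ and its infinitesimal variation at $0$ is represented by the Beltrami class $\xi$.

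Next, I would construct $\widetilde{\nu}(t)$ explicitly. Let $\eta_{t}$ be a meromorphic differential of the third kind on $(C,J_{t})$, depending smoothly on $t$, with simple poles of residues $+1,-1$ at $p_{i},q_{i}$. After the standard Dolbeault correction by a smooth exact form, the class of $\eta_{t}$ in $H^{1}(C,\mathbb{C})$ projects to the Abel--Jacobi class of $D_{t}=D_{0}$ in $H^{1}(C_{t},\mathcal{O}_{C_{t}})$, providing an explicit local lift. Its Gauss--Manin derivative in our trivialization is $\nabla_{\xi}\widetilde{\nu}=\partial_{t}\eta_{t}|_{t=0}$, whose $(0,1)$-component with respect to $J_{0}$ is $\xi\cdot\eta_{0}$.

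The last step is the Stokes computation. Using the antisymmetry
\[
\int_{C}(\xi\cdot\eta_{0})\wedge\omega=-\int_{C}\eta_{0}\wedge(\xi\cdot\omega)=-\int_{C}\eta_{0}\wedge\overline{\partial}h,
\]
I would integrate by parts on $C\setminus\bigcup_{i}(B_{\varepsilon}(p_{i})\cup B_{\varepsilon}(q_{i}))$ and pass to the limit $\varepsilon\to 0$. Since $\overline{\partial}\eta_{0}$ vanishes on $C\setminus\mathrm{supp}(D_{0})$ and concentrates at the poles as a sum of delta currents at the $p_{i}$ and $q_{i}$ (Poincar\'e--Lelong), the resulting boundary integrals over $\partial B_{\varepsilon}(p_{i})$ and $\partial B_{\varepsilon}(q_{i})$ extract, with appropriate signs, the values $h(p_{i})$ and $h(q_{i})$, and the limit collapses to $\sum_{i}(h(p_{i})-h(q_{i}))$.

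The main difficulty will be the careful bookkeeping: verifying that the Dolbeault correction used to define $\widetilde{\nu}$ does not alter the Abel--Jacobi class, controlling the normalization constants in the Poincar\'e--Lelong and residue computations, and checking that the $(1,0)$-components of $\partial_{t}\eta_{t}$ do not contribute to the final pairing (since $\omega$ is $(1,0)$). Once the formula is established, the claimed independence of the choice of $h$ (determined by $\xi\cdot\omega$ up to an additive constant, which is harmless because $\deg D_{0}=0$) and of the representatives of $\xi\otimes\omega$ follows at once from the intrinsic nature of the left-hand side.
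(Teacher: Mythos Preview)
The paper does not supply its own proof of this statement: Theorem~\ref{calc} is quoted verbatim from Griffiths \cite[pp.~292--293]{grif} and used as a black box, so there is nothing in the paper to compare your argument against.

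That said, your outline is precisely a sketch of Griffiths' original computation in the cited pages: one realizes a lift of the normal function via a smoothly varying differential of the third kind $\eta_{t}$ with residues $\pm 1$ along the (constant) sections $p_{i},q_{i}$, identifies the $(0,1)$-part of $\nabla_{\xi}\widetilde{\nu}$ with $\xi\cdot\eta_{0}$, and then uses the adjunction $\int_{C}(\xi\cdot\eta_{0})\wedge\omega=-\int_{C}\eta_{0}\wedge(\xi\cdot\omega)$ together with a residue/Stokes argument around the poles to extract $\sum_{i}h(p_{i})-h(q_{i})$. The points you flag as ``bookkeeping'' (the $2\pi i$ normalization in the Poincar\'e--Lelong step, the irrelevance of the $(1,0)$-part of $\partial_{t}\eta_{t}$ when paired against the holomorphic $\omega$, and the ambiguity of $h$ by an additive constant killed by $\deg D_{0}=0$) are exactly the details Griffiths handles there; your plan is sound and faithful to the source.
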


The second tool we need is a computation by Voisin \cite{voi} which we now describe in our particular case.
\\
Let $\pi:\mathcal{C}\rightarrow \Delta$ be a smooth family of curves over the unit disc in $\mathbb{C}$ 
and let $\mathcal{D}$ be as above. 
There is a short exact sequence over $C$:
\[ 0\rightarrow\mathcal{O}_{C}\rightarrow
\Omega^{1}_{\mathcal{C}|C}\otimes\pi^{*}T_{\Delta,0}\rightarrow 
\omega_{C}\otimes \pi^{*}T_{\Delta,0}\rightarrow 0,\]
which gives in cohomology
\[ H^{0}(C,\Omega^{1}_{\mathcal{C}|C}\otimes\pi^{*}T_{\Delta,0})\stackrel{\gamma}{\rightarrow}H^{0}(C,\omega_{C})
\otimes T_{\Delta,0}\stackrel{\delta}{\rightarrow} H^{1}(C,\mathcal{O}_{C}).\]
If $\omega\otimes\eta\in H^{0}(C,\omega_{C})\otimes T_{\Delta,0}$ 
is such that $\delta(\omega\otimes\eta)=0$, then there exists 
$\Omega=\hat{\omega}\otimes\eta\in H^{0}(C,\Omega^{1}_{\mathcal{C}|C}\otimes\pi^{*}T_{\Delta,0})$ 
such that $\gamma(\Omega)=\omega\otimes\eta$.
\\
Write $D_{0}=D_{1,0}-D_{2,0}=\sum_{i=1}^{l}p_{i}-\sum_{i=1}^{l}q_{i}$, and 
consider, for $i=1,2$, the image of $\Omega$ under the map $F_{i}:H^{0}(C,\Omega^{1}_{\mathcal{C}|C}\otimes\pi^{*}T_{\Delta,0})
\rightarrow H^{0}(D_{i,0},\mathcal{O}_{D_{i,0}})$ obtained as the composition of the following maps
\[ H^{0}(C,\Omega^{1}_{\mathcal{C}|C}\otimes\pi^{*}T_{\Delta,0})\rightarrow H^{0}(C,\Omega^{1}_{D_{i}|D_{i,0}}
\otimes\pi^{*}T_{\Delta,0})\rightarrow H^{0}(D_{i,0},\mathcal{O}_{D_{i,0}})\simeq\mathbb{C}^{l};\]
i.e. $F_{1}(\Omega)=(\hat{\omega}(p_{1}),\cdots,\hat{\omega}(p_{l}))$, $F_{2}(\Omega)=(\hat{\omega}(q_{1}),\cdots,\hat{\omega}(q_{l}))$. Then we have: 
\begin{equation}
\label{voi}
\delta\nu(0)(\omega\otimes\eta)=\sum_{i=1}^{l}\hat{\omega}(p_{i})-\hat{\omega}(q_{i})
\end{equation} (see \cite[5.5 and 5.8]{voi} where it is proved in much more generality).

\section{The Torelli theorem}\label{ex}
Let $C$ be a smooth generic curve of genus $g\geqslant 4$. Recall that
\[ W_{g-1}^{i}(C)=\{L\in\Pic^{g-1}(C)\textit{ }|\textit{ }h^{0}(L)\geqslant i+1\},
\]
and that its smooth locus is
\[
W_{g-1,\textrm{sm}}^{i}(C)=\{L\in\Pic^{g-1}(C)\textit{ }|\textit{ }h^{0}(L)= i+1\}.\]
Let us consider a line bundle $L\in W_{g-1,\textrm{sm}}^{1}(C)$ without base point
 and its residue line bundle $\omega_{C}\otimes L^{-1}\in W_{g-1,\textrm{sm}}^{1}(C)$.
Notice that since $C$ is generic $L^{\otimes 2}\neq \omega_{C}$.
\\
We fix two basis $\{s_{0},s_{1}\}$, $\{t_{0},t_{1}\}$ of $H^{0}(C,L)$ and $H^{0}(C,\omega_{C}\otimes L^{-1})$ respectively. 
\\
Denoting by  
\[\mu: H^{0}(C,L)\otimes H^{0}(C,\omega_{C}\otimes L^{-1})\rightarrow H^{0}(C,\omega_{C})\] 
the Petri map, we consider the holomorphic forms $\omega_{ij}=\mu(s_{i}\otimes t_{j})$. 
The vector space $V=\spa\{\omega_{00},\omega_{01},\omega_{10},\omega_{11}\}$ is a four dimensional 
vector subspace of $H^{0}(C,\omega_{C})$ without base point. 
We then have a holomorphic map $\phi: C\rightarrow\mathbb{P}V^{*}\cong\mathbb{P}^{3}$ and $\phi(C)$ lies on the quadric 
$Q=\{\omega_{00}\omega_{11}-\omega_{01}\omega_{10}=0\}$. 
Furthermore $C$ is birational to $\phi(C)$ \cite[prop. 8.33 p. 834]{ar2}.
\\
Let $\pi:\mathcal{C}\rightarrow B$ be the Kuranishi family of $C$. Restricting $B$ if necessary, we can suppose that there exist sections 
$p_{i}:B\rightarrow \mathcal{C}$, 
$i=1,\cdots,g-1$, such that $D_{t}=\sum_{i=1}^{g-1}p_{i}(t)$ verifies $L\cong \mathcal{O}_{C}(D_{0})$ and $H^{0}(C_{t},\mathcal{O}_{C_{t}}(D_{t}))=2$ 
for every 
$t\in B$, i.e. $L_{t}=\mathcal{O}_{C_{t}}(D_{t})\in W_{g-1,\textrm{sm}}^{1}(C_{t})$. We can regard $\{(C_{t},L_{t})\}_{t\in B}$ as a deformation of $(C,L)$. 
Since $2D_{t}-K_{C_{t}}$ has degree zero, we then get a normal function $\nu(t)=AJ_{C_{t}}(2D_{t}-K_{C_{t}})$ ($K_{C_{t}}$ denotes a canonical divisor of $C_{t}$).
\\
Our aim is to prove that the infinitesimal $\delta\nu(0)$ reconstructs the curve \mbox{$\phi(C)\subset\mathbb{P}^{3}$.}
 \\
 As first step, we construct, for every point $q\in Q^{\prime}:=Q\setminus\textrm{Sing}(\phi(C))$, 
 an holomorphic form $\omega_{q}\in V$ and an element $\xi_{q}\in H^{1}(C,T_{C})$ 
 such that $\xi_{q}\cdot\omega_{q}=0$.
\begin{oss} The normal function vanishes in $t$ if $L_{t}$ is a theta characteristic, i.e. $L_{t}^{\otimes 2}\cong \omega_{C_{t}}$. 
\end{oss}
\begin{oss} We could have defined the normal function over the relative Brill-Noether variety $\mathcal{W}^{1}_{g-1}$ (see \cite[chap. XXI]{ar2}). 
Its tangent space in fact parametrizes first order deformations of $(C,L)$ that preserves sections of $L$. We decided to work instead over the Kuranishi family $B$, 
to stress the fact that our calculation of $\delta\nu(0)$ depends only on first order deformations of $C$, which are parametrized by $T_{B,0}\cong H^{1}(C,T_{C})$.
\end{oss} 

\subsection{The form $\omega_{q}$}
\label{form}
Consider the Segre map $S: \mathbb{P}H^{0}(L)\times \mathbb{P}H^{0}(\omega_{C}\otimes L^{-1})\rightarrow \mathbb{P}V\cong\mathbb{P}^{3}$. 
The basis chosen above give projective coordinates $x_{0},x_{1}$ on $\mathbb{P}H^{0}(L)\simeq\mathbb{P}^{1}$ and 
$y_{0},y_{1}$ on \mbox{$\mathbb{P}H^{0}(\omega_{C}\otimes L^{-1})\simeq\mathbb{P}^{1}$}.
Let us also fix coordinates $z_{ij}$ on $\mathbb{P}^{3}$.
Then $Q=\{z_{00}z_{11}-z_{01}z_{10}=0\}$ is the image of $S$.
\\
Let us consider a point $q\in Q$, say \mbox{$q=S((a_{0}:a_{1}),(b_{0}:b_{1}))$}.
\\
Call $H$ the hyperplane of $\mathbb{P}^{3}$ containing the two rulings $l_{1},l_{2}$ passing through the point $q$. 
\\
We have the equations $l_{1}=\{a_{1}z_{00}-a_{0}z_{10}=a_{1}z_{01}-a_{0}z_{11}=0\}$,
$l_{2}=\{b_{1}z_{00}-b_{0}z_{01}=b_{1}z_{10}-b_{0}z_{11}=0\}$ and $H=\{F=0\}$, 
where $F\in H^{0}(\mathcal{O}_{\mathbb{P}^{3}}(1))$ is the polynomial \mbox{$a_{1}b_{1}z_{00}-a_{0}b_{1}z_{10}-a_{1}b_{0}z_{01}+a_{0}b_{0}z_{11}$.}
Notice that we have 
\[S^{*}(F)=(a_{1}x_{0}-a_{0}x_{1})(b_{1}y_{0}-b_{0}y_{1}).\]
Let us define $s=a_{1}s_{0}-a_{0}s_{1}$, $t=b_{1}t_{0}-b_{0}t_{1}$ and 
\[\omega_{q}=\mu(s\otimes t)=a_{1}b_{1}\omega_{00}-a_{0}b_{1}\omega_{10}-a_{1}b_{0}\omega_{01}+a_{0}b_{0}\omega_{11}.\]

\begin{oss}\label{0}
  1) If $q=\phi(p)$ we have that $p\in Z(s)\cap Z(t)$, where $Z(s)$ and $Z(t)$ are the zero divisors of $s$ and $t$.
\\
In fact $p$ is not a base point of $|\omega_{C}\otimes L^{-1}|$ so $t_{0}(p)$ and $t_{1}(p)$ are not both zero.
Then $s(p)=0$ if and only if $s(p)t_{0}(p)=s(p)t_{1}(p)=0$, i.e. $a_{1}\omega_{00}(p)-a_{0}\omega_{10}(p)=a_{1}\omega_{01}(p)-a_{0}\omega_{11}(p)=0$.
But this last equation is verified because $q\in l_{1}$.
\\
In the same way one can check that $t(p)=0$.
\\
2) An argument similar to the above also yelds that $\phi^{*}(l_{1|\phi(C)})=Z(s)$.
\end{oss}

\subsection{The deformation $\xi_{q}$}
\label{symm}
Let $q$ be a point in $Q^{\prime}=Q\setminus\textrm{Sing}(\phi(C))$ and consider the sections $s,t$ constructed before.
Let us also define $\phi(C)_{sm}:=\phi(C)\setminus\textrm{Sing}(\phi(C))$, the smooth locus of $\phi(C)$.
Choose sections $s^{\prime}\in H^{0}(C,L)$ and \mbox{$t^{\prime}\in H^{0}(C,\omega_{C}\otimes L^{-1})$} such that $\spa\{s,s^{\prime}\}=H^{0}(C,L)$ 
and $\spa\{t,t^{\prime}\}=H^{0}(C,\omega_{C}\otimes L^{-1})$. 
For notational convenience we denote $\mu(s\otimes t)=st$ and similarly for the other cup-products.
We define $W=\spa\{st,st^{\prime},s^{\prime}t\}\subset V$ and $\Ann W=\{\xi\in H^{1}(T_{C}) \ | \ \xi\cdot W=0\}$.
 \begin{lem}
We have $\dim(\Ann W)=1$.
\end{lem}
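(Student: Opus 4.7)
The plan is to dualize via Serre duality and reduce the claim to a dimension count in $H^{0}(C,\omega_{C}^{\otimes 2})$ that can be carried out with the base-point-free pencil trick and Riemann--Roch.

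By Serre duality the cup-product $H^{1}(C,T_{C})\otimes H^{0}(\omega_{C})\to H^{1}(\mathcal{O}_{C})$ is the transpose of the multiplication $m:H^{0}(\omega_{C})^{\otimes 2}\to H^{0}(\omega_{C}^{\otimes 2})$, so under the identification $H^{1}(C,T_{C})\cong H^{0}(\omega_{C}^{\otimes 2})^{*}$ a class $\xi$ lies in $\Ann W$ if and only if it annihilates
\[ M:=m\bigl(W\otimes H^{0}(\omega_{C})\bigr)\subset H^{0}(\omega_{C}^{\otimes 2}). \]
Since $h^{0}(\omega_{C}^{\otimes 2})=3g-3$, the statement is equivalent to the equality $\dim M=3g-4$. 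Setting $L':=\omega_{C}\otimes L^{-1}$ and writing $W=s\cdot H^{0}(L')+\mathbb{C}\cdot s't$ yields $M=A+B$, where
\[ A:=s\cdot\bigl(H^{0}(L')\cdot H^{0}(\omega_{C})\bigr),\qquad B:=s't\cdot H^{0}(\omega_{C}). \]

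The base-point-free pencil trick applied to the pencil $|L'|$ and the line bundle $\omega_{C}$ shows that the multiplication $H^{0}(L')\otimes H^{0}(\omega_{C})\to H^{0}(L'\otimes\omega_{C})$ has kernel $H^{0}(L)$ (of dimension $2$) and targets a space of dimension $2g-2$ (by Riemann--Roch, using $h^{1}(L'\otimes\omega_{C})=h^{0}((L')^{-1})=0$), so the map is surjective. Therefore $\dim A=2g-2$ and, since multiplication by the nonzero section $s't$ is injective, $\dim B=g$. It remains to show $\dim(A\cap B)=2$. An element $s\eta=s't\omega\in A\cap B$ is uniquely determined by $\omega\in H^{0}(\omega_{C})$; since $|L|$ is base-point-free, $Z(s)$ and $Z(s')$ are disjoint, and divisibility $s\mid s't\omega$ is equivalent to $t\omega$ vanishing on $Z(s)$. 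By Remark~\ref{0}, $Z(s)\cap Z(t)$ is either empty (when $q\notin\phi(C)$) or the single reduced point $\phi^{-1}(q)$ (when $q\in\phi(C)_{sm}$), so the condition on $\omega$ is that it vanish on $Z(s)\setminus Z(t)$. A direct Riemann--Roch computation, using base-point-freeness of $|L|$, gives dimension exactly $2$ in both cases.

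Combining, $\dim M=(2g-2)+g-2=3g-4$, so $\dim\Ann W=1$. The main obstacle is the last step: the naive generic-position expectation inside the $(2g-2)$-dimensional space $H^{0}(L'\otimes\omega_{C})$ would predict $\dim(A\cap B)=1$, so one must exhibit explicitly the extra dimension coming from the special position of $t\cdot H^{0}(\omega_{C})$ relative to the subspace of sections vanishing on $Z(s)$ (equivalently, from the zero of $t$ along $Z(s)\cap Z(t)$), and bound it above by $2$ via Riemann--Roch.
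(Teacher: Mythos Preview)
Your argument is correct and follows the same overall strategy as the paper: Serre-dualize to identify $\Ann W$ with the annihilator of the image $M=m(W\otimes H^{0}(\omega_{C}))\subset H^{0}(\omega_{C}^{\otimes 2})$, then show $\dim M=3g-4$ via the base-point-free pencil trick and Riemann--Roch. The difference lies in the decomposition. The paper splits $W$ symmetrically as $W_{1}+W_{2}$ with $W_{1}=\langle st,s't\rangle$, $W_{2}=\langle st,st'\rangle$, obtaining $U_{1}=t\cdot H^{0}(L\otimes\omega_{C})$ and $U_{2}=s\cdot H^{0}(L'\otimes\omega_{C})$, each of dimension $2g-2$, with $\dim(U_{1}\cap U_{2})=g$; this uses the pencil trick twice (once for $|L|$, once for $|L'|$) and identifies $U_{1}\cap U_{2}$ with a full $H^{0}(\omega_{C}(p))$ or $H^{0}(\omega_{C})$. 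You instead take the asymmetric splitting $W=s\cdot H^{0}(L')+\mathbb{C}\,s't$, so that your $A$ coincides with the paper's $U_{2}$ while your $B=s't\cdot H^{0}(\omega_{C})$ is a proper $g$-dimensional subspace of $U_{1}$; the pencil trick is invoked only once, and the intersection $A\cap B$ is computed directly as $s't\cdot H^{0}(L')$ (resp.\ $s't\cdot H^{0}(L'(p))$) of dimension $2$. Both routes yield $3g-4$; yours is marginally more economical, while the paper's symmetric version makes the role of the two pencils $|L|$ and $|L'|$ more transparent and identifies the intersection space more intrinsically.
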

\begin{proof}
Consider the spaces 
\[ 
U=\I(m:W\otimes H^{0}(C,\omega_{C})\rightarrow H^{0}(C,\omega_{C}^{\otimes2})) \textrm{   and} 
\]
\[
\Ann U=\{\xi\in H^{1}(C,T_{C})\textit{ }|\textit{ }_{H^{1}(T_{C})}\langle\xi,U\rangle_{H^{0}(\omega_{C}^{\otimes 2})}=0\}.
\]
From the equality
 \[_ {H^{1}(T_{C})}\langle\xi,\omega\omega^{\prime}\rangle_{H^{0}(\omega_{C}^{\otimes 2})}=
\textrm{ } _{H^{1}(\mathcal{O}_{C})}\langle\xi\cdot\omega,\omega^{\prime}\rangle_{H^{0}(\omega_{C})}\]
it follows that $\Ann W=\Ann U$.
In order to compute $\dim(\Ann U)$ we set 
\[
W_{1}=\spa\{st,s^{\prime}t\}, \textrm{  } W_{2}=\spa\{st,st^{\prime}\} \textrm{  and} 
\]
\[
U_{i}=\I(m:W_{i}\otimes H^{0}(C,\omega_{C})\rightarrow H^{0}(C,\omega_{C}^{\otimes 2})) \textrm{  for $i=1,2$.}
\]
Recall that $s$ and $s^{\prime}$ have no common zeroes.
Using the base-point-free-pencil trick \cite[p.~126]{ar} we see that 
$\dim(U_{1})=\dim(H^{0}(C,L)\otimes H^{0}(C,\omega_{C}))-\dim(\ke m)=2g-h^{0}(\omega_{C}\otimes L^{-1})=2g-2$ and, in the same way, 
we find $\dim(U_{2})=2g-2$.
\\
Now we calculate $\dim(U_{1}\cap U_{2})$. 
Let us suppose first that $q\in\phi(C)_{sm}$.
Then $\phi^{-1}(q)$ is a single point $p$ and, by Remark \ref{0}, $s$ and $t$ both vanishes in $p$.
Write $Z(s)=A+p$ and $Z(t)=B+p$ and notice that 
\[
U_{1}\subset H^{0}(C,\omega_{C}^{\otimes2}(-p-B))
\] 
\[
U_{2}\subset H^{0}(C,\omega_{C}^{\otimes2}(-p-A))
.\]
Since $h^{0}(\omega_{C}^{\otimes2}(-p-B)))=2g-2=\dim U_{1}$ we get $U_{1}= H^{0}(C,\omega_{C}^{\otimes2}(-p-B))$, and
simililarly $U_{2}=H^{0}(C,\omega_{C}^{\otimes2}(-p-A))$.
Then we have $\dim(U_{1}\cap U_{2})=h^{0}(\omega_{C}^{\otimes2}-p-A-B)=h^{0}(\omega_{C}(p))=g$ (we are using that $\mathcal{O}_{C}(2p+A+B)=\omega_{C}$).
Therefore $\dim U=\dim U_{1}+\dim U_{2}-\dim(U_{1}\cap U_{2})=3g-4$ and by duality $\dim(\Ann U)=1$.

Let us suppose now that $q\in Q\setminus\phi(C)$.
Then $s$ and $t$ have no common zeroes.
Reasoning as above we see that $U_{1}=H^{0}(C,\omega_{C}^{\otimes2}(-E))$ and $U_{2}=H^{0}(C,\omega_{C}^{\otimes2}(-D))$ where $D=Z(s)$ and $E=Z(t)$.
Then $\dim(U_{1}\cap U_{2})=h^{0}(\omega_{C}^{\otimes2}-D-E)=h^{0}(\omega_{C})=g$ and then $\dim U=3g-4$.
 \end{proof}
\begin{oss}
Although the space $W$ depends on the choice of the sections $s'$ and $t'$, the space $\Ann W$ does not.
\end{oss}
 
Given a point $q\in Q^{\prime}$ we call $\xi_{q}$ the element of $H^{1}(C,T_{C})$ that generates $\Ann W$.
We get in this way a rational map $\varphi:Q\dashrightarrow\mathbb{P}H^{1}(C,T_{C})$ which is defined on $Q^{\prime}$. 
We want an explicit description of the first order \mbox{deformation $\xi_{q}$.}

If $q=\phi(p)$ is a smooth point of $\phi(C)$ then $\xi_{q}=\theta_{p}$, the Schiffer variation in $p$.
In fact, $\ker(\theta_{p}:H^{0}(C,\omega_{C})\to H^{1}(C,\mathcal{O}_{C}))=H^{0}(C,\omega_{C}(-p))$ (see \cite[p. 275]{grif}) and 
$W\subset H^{0}(C,\omega_{C}(-p))$ because $s(p)=t(p)=0$. 
Then $\theta_{p}\cdot W=0$.

Let us consider a point $q\in Q\setminus\phi(C)$.
If we denote $F=L\oplus(\omega_{C}\otimes L^{-1})$ we have a commutative diagram
\begin{equation}
\label{diag}
\xymatrix{0\ar[r] & T_{C}\ar[r]^{(t,-s)}\ar[d]^{\omega_{q}} & F^{*}\ar[r]^{\binom{s}{t}}\ar[d]^{\omega_{q}} & \mathcal{O}_{C}\ar[r]\ar[d]^{\omega_{q}} & 0 \\
0\ar[r] & \mathcal{O}_{C}\ar[r]^{(s,t)} & F\ar[r]^{\binom{t}{-s}} & \omega_{C}\ar[r] & 0.
}
\end{equation} 
The first row defines an element of Ext$^{1}(\mathcal{O}_{C},T_{C})$. We call $\delta:H^{0}(C,\mathcal{O}_{C})\rightarrow H^{1}(C,T_{C})$ and 
$\delta^{\prime}:H^{0}(C,\omega_{C})\rightarrow H^{1}(C,\mathcal{O}_{C})$ the coboundaries of the first and second row. 
Let us define $\xi_{q}=\delta(1)$ and recall the map $\delta^{\prime}$ is given by cupping with $\xi_{q}$.
Notice that the forms $\omega_{q}=st$, $st^{\prime}$ and $s^{\prime}t$ have lifting to \mbox{$H^{0}(C,F)\simeq H^{0}(C,L)\oplus H^{0}(C,\omega_{C}\otimes L^{-1})$}
given by, respectively, $(s,0)$, $(s^{\prime},0)$, $(0,-t^{\prime})$. 
Then by exacteness of the cohomology sequence $0=\delta^{\prime}(W)=\xi_{q}\cdot W$.
\subsection{Computation of $\delta\nu$}
Let us call for brevity $\mathbb{P}=\mathbb{P}H^{1}(C,T_{C})$,  and consider 
\[
\Sigma=\{((\xi),(\omega))\in\mathbb{P}\times\mathbb{P}V\textrm{ }|\textrm{ }\xi\cdot\omega=0\},\]
where we call $(\xi)$ the class of $\xi$ in $\mathbb{P}$ and similarly for $(\omega)$.
Inside $\Sigma$ we have the variety $X=\{((\xi_{q}),(\omega_{q})) \ | \ q\in Q^{\prime}\}$. 
Let us call $\pi_{1}$ and $\pi_{2}$ the projections from $\mathbb{P}\times\mathbb{P}V$ onto 
$\mathbb{P}$ and $\mathbb{P}V\cong\mathbb{P}^{3}$ respectively.
Then $\delta\nu(0)$ can be thought as a map 
\begin{equation}
\label{sec}
 \delta\nu(0):(\pi^{*}_{1}\mathcal{O}_{\mathbb{P}}(-1)\otimes\pi_{2}^{*}\mathcal{O}_{\mathbb{P}^{3}}(-1))_{|X}
\rightarrow\mathcal{O}_{X}.
\end{equation}
\begin{teo}\label{i.i.}
We have
\begin{align}
\delta\nu(0)(\xi_{q}\otimes\omega_{q}) & =0 \textrm{ if }q\in\phi(C)_{sm} \\
\delta\nu(0)(\xi_{q}\otimes\omega_{q}) & \neq 0  \textrm{ if }q\in Q\setminus\phi(C) .
\end{align}
\end{teo}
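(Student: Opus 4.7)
The plan is to apply Griffiths' formula (Theorem \ref{calc}) in both cases: for each pair $\xi_q \otimes \omega_q$, I pick a concrete divisor $D_0$ representing the class $2D_0 - K_C$, build an explicit Dolbeault potential $h$ for $\xi_q \cdot \omega_q$ via a Cech-to-Dolbeault conversion, and then evaluate $\sum h(p_i) - \sum h(q_j)$.

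For the first assertion ($q = \phi(p) \in \phi(C)_{sm}$, $\xi_q = \theta_p$): since $s(p) = t(p) = 0$ by Remark \ref{0}, we have $Z(s) = A + p$ and $Z(t) = B + p$ with $A, B$ effective of degree $g-2$, generically disjoint from $p$. Then $Z(\omega_q) = A + B + 2p$ is a canonical divisor, and with $D_0 = Z(s)$, $K_C = Z(\omega_q)$ the normal-function divisor $2D_0 - K_C$ is linearly equivalent to $A - B$. On the cover $U_0 = \{|z| < \varepsilon\}$ (a coordinate disc around $p$) and $U_1 = C \setminus \{p\}$, the Schiffer variation $\theta_p$ has the standard Cech representative $\pm (1/z)\partial_z$; together with the local expansion $\omega_q = z^2 f(z)\, dz$ (where $f(0) \neq 0$), the Cech cocycle for $\theta_p \cdot \omega_q$ is $\pm z f(z)$, which vanishes at $p$. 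A partition of unity $\rho_0 + \rho_1 = 1$ subordinate to $\{U_0, U_1\}$ then produces the globally smooth potential $h = \pm \rho_0\, z f(z)$, supported in $U_0$. Choosing $\varepsilon$ small enough that $A \cup B$ is disjoint from $U_0$, we have $h \equiv 0$ on $A \cup B$, so Griffiths' formula yields $\delta\nu(0)(\theta_p \otimes \omega_q) = 0$.

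For the second assertion ($q \in Q \setminus \phi(C)$, $\xi_q = \delta(1)$ from diagram \eqref{diag}): now $D = Z(s)$ and $E = Z(t)$ are disjoint, so $U_0 = C \setminus E$ and $U_1 = C \setminus D$ form an open cover of $C$. Choose $D_0 = D$ and $K_C = Z(\omega_q) = D + E$, so that the normal-function divisor is $D - E$. On $U_0$ the $C^\infty$ splitting $\sigma_0 = (0, 1/t)$ of $\binom{s}{t}\colon F^* \to \mathcal{O}_C$ is defined, and similarly $\sigma_1 = (1/s, 0)$ on $U_1$. Their difference $\sigma_0 - \sigma_1 = (-1/s, 1/t)$ on $U_0 \cap U_1$ lies in $T_C$ and corresponds, via the embedding $T_C \hookrightarrow F^*$, $v \mapsto (tv, -sv)$, to the $T_C$-valued Cech cocycle $-1/(st)$ representing $\xi_q$. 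Cupping with $\omega_q = st$ produces the constant Cech cocycle $-1$ on $U_0 \cap U_1$; conversion to Dolbeault via a partition of unity $\rho_0 + \rho_1 = 1$ subordinate to $\{U_0, U_1\}$ yields the globally smooth potential $h = \rho_0$. Since $\mathrm{supp}(\rho_1) \subset U_1$ forces $\rho_0 \equiv 1$ on $D \subset C \setminus U_1$, and symmetrically $\rho_0 \equiv 0$ on $E$, Griffiths' formula gives $\delta\nu(0)(\xi_q \otimes \omega_q) = (g-1)\cdot 1 - (g-1)\cdot 0 = g - 1 \neq 0$.

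The main delicate point is the Cech-to-Dolbeault conversion, which must deliver a globally smooth $\bar\partial$-potential whose evaluation on the divisor computes the infinitesimal invariant. In the first case this works because the Cech cocycle $\pm z f(z)$ vanishes at $p$ (a reflection of $\omega_q$ vanishing to order $\geq 2$ there), localizing $h$ to a neighborhood of $p$ disjoint from $A \cup B$; in the second case the disjointness $D \cap E = \emptyset$, which is precisely the hypothesis $q \notin \phi(C)$, makes the partition-of-unity function $\rho_0$ itself serve as a valid $h$, with the clean values $1$ on $D$ and $0$ on $E$ driving the nonzero answer.
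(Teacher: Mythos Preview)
Your proof is correct. Case 1 matches the paper's argument almost verbatim: both use Theorem \ref{calc} with the Dolbeault representative of the Schiffer variation $\theta_p$, and both conclude by noting that the resulting potential is a bump function supported near $p$, hence vanishes on the remaining points of $Z(s)-Z(t)$.

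Case 2 is where you diverge from the paper. The paper does \emph{not} use Theorem \ref{calc} there; instead it invokes Voisin's formula (\ref{voi}), which requires lifting $\omega_q$ to a section $\hat\omega_q$ of $\Omega^1_{\mathcal{C}|C}$ and evaluating the transverse components at the $p_i,q_i$. The actual computation of those values is then carried out by a separate diagram chase (Lemma \ref{3.7}), comparing the tautological extension $0\to\mathcal{O}_C\to\Omega^1_{\mathcal{C}|C}\to\omega_C\to 0$ with the ``Koszul'' extension $0\to\mathcal{O}_C\xrightarrow{(s,t)}F\to\omega_C\to 0$. This yields the values $(c,\dots,c,c+1,\dots,c+1)$ and hence $\delta\nu(0)=-(g-1)$. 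Your route stays entirely within Theorem \ref{calc}: you build an explicit \v{C}ech cocycle $\pm 1/(st)$ for $\xi_q$ on the two-set cover $\{C\setminus E,\,C\setminus D\}$, cup with $\omega_q=st$ to get the constant cocycle $\pm 1$, and convert to the Dolbeault potential $h=\rho_0$. The disjointness $D\cap E=\emptyset$ (equivalent to $q\notin\phi(C)$) is exactly what makes this cover legitimate and forces $\rho_0\equiv 1$ on $D$, $\rho_0\equiv 0$ on $E$; the answer $\pm(g-1)$ agrees with the paper up to sign convention. Your argument is more elementary---it avoids the Voisin machinery and the auxiliary Lemma \ref{3.7}---while the paper's approach has the virtue of making the link to the first-order deformation of the divisor in the family more transparent.
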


\begin{proof} 
\textit{Case 1. $q\in\phi(C)_{sm}$ }
\\
Recall that for $(U,z)$ a coordinate chart centered in $p$, locally in $U$ we have that $\theta_{p}=\frac{\overline{\partial}\rho(z)}{z}\frac{\partial}{\partial z}$, 
where $\rho$ is a bump function in $p$. In this coordinate chart we can also write $\omega_{q}=zf(z)dz$ for some holomorphic function $f$. 
Then $\xi_{q}\cdot\omega_{q}=\overline{\partial}(f\rho)$.
\\
Set $Z(s)=\sum_{i=1}^{g-1}p_{i}$ and $Z(t)=\sum_{i=1}^{g-1}q_{i}$. Since $p\in Z(s)\cap Z(t)$ we can suppose that $p=p_{1}=q_{1}$.
Then by Theorem \ref{calc} we have \[
\delta\nu(0)(\xi_{q}\otimes\omega_{q})=\sum_{i=1}^{g-1}f(p_{i})\rho(p_{i})-f(q_{i})\rho(q_{i})=0,
\]
 where the second equality follows by the fact that $\rho$ is a bump function.
\medskip
\\
\textit{Case 2. $q\in Q\setminus\phi(C)$}
\\
Let $\Delta\subset B$ be the unit disc in $\mathbb{C}$ such that
 $T_{\Delta,0}=\spa\{\xi_{q}\}$ (here we are using that $T_{B,0}\simeq H^{1}(C,T_{C})$). 
 By pull-back we can suppose that our family of curves is $\pi:\mathcal{C}\rightarrow\Delta$.
Recall from Section \ref{1} that we have the exact sequence 
\[ H^{0}(C,\Omega^{1}_{\mathcal{C}|C}\otimes\pi^{*}T_{\Delta,0})\simeq H^{0}(C,\Omega^{1}_{\mathcal{C}|C})
\otimes T_{\Delta,0}\stackrel{\gamma}{\rightarrow}H^{0}(C,\omega_{C})\otimes T_{\Delta,0}\stackrel{\delta}{\rightarrow} H^{1}(C,\mathcal{O}_{C}).\]
Remember that we have constructed $s\in H^{0}(C,L)$ and $t\in H^{0}(C,\omega_{C}\otimes L^{-1})$ and that \mbox{$\omega_{q}=\mu(s\otimes t)$},
$Z(s)=\sum_{i=1}^{g-1}p_{i}$, $Z(t)=\sum_{i=1}^{g-1}q_{i}$.
\\
We write, locally near $p_{i}$, $\omega_{q}=f_{i}(z_{i})dz_{i}$, and 
near $q_{i}$, $\omega_{q}=\tilde{f}_{i}(w_{i})dw_{i}$. 
Now since $\delta(\omega_{q}\otimes\xi_{q})=\xi_{q}\cdot\omega_{q}=0$, $\omega_{q}\otimes\xi_{q}$ lifts to 
$\Omega=\hat{\omega}_{q}\otimes\xi_{q}$, with $\hat{\omega}_{q}\in H^{0}(C,\Omega^{1}_{\mathcal{C}|C})$.
\\
Then we have $\hat{\omega}_{q}=f_{i}(z_{i})dz_{i}+g_{i}(z_{i})dt_{i}$ near $p_{i}$ and 
$\hat{\omega}_{q}=\tilde{f}_{i}(w_{i})dw_{i}+\tilde{g}_{i}(w_{i})dt_{i}$ near $q_{i}$. 
\\
With the notations of Section \ref{1} we have $D_{1,0}=\sum_{i=1}^{g-1}p_{i}$ and $D_{2,0}=\sum_{i=1}^{g-1}q_{i}$ and 
\[
F_{1}(\hat{\omega}_{q}\otimes\xi_{q})=(g_{1}(p_{1}),\cdots,g_{g-1}(p_{g-1}))\textrm{  and}
\]
\[
F_{2}(\hat{\omega}_{q}\otimes\xi_{q})=(\tilde{g}_{1}(q_{1}),\cdots,\tilde{g}_{g-1}(q_{g-1})),
\]
because $f_{i}(p_{i})=\tilde{f}_{i}(q_{i})=0$ for $i=1,\cdots g-1$. So by (\ref{voi}) we have 
\begin{equation}
\label{formula}
\delta\nu(0)(\omega_{q}\otimes\xi_{q})=\sum_{i=1}^{g-1}g_{i}(p_{i})-\tilde{g}_{i}(q_{i}). 
\end{equation}

\begin{lem}\label{3.7}
There exist $c\in\mathbb{C}$ such that $g_{i}(p_{i})=c$ and $\tilde{g}_{i}(q_{i})=c+1$ for $i=1,\cdots,g-1$.
\end{lem}\label{lemma}
\begin{proof}
We have the commutative diagram
\begin{equation}
\label{big}
\xymatrix{
& 0 \ar[d] & 0\ar[d] & 0 \ar[d] & \\
0 \ar[r] & T_{C} \ar[r]\ar[d]^{\omega_{q}} & T_{\mathcal{C}|C}\ar[r]\ar[d]^{\omega_{q}} & \mathcal{O}_{C} \ar[r]\ar[d]^{\omega_{q}} & 0 \\
0 \ar[r] & \mathcal{O}_{C} \ar[r]\ar[d]^{r} & \Omega^{1}_{\mathcal{C}|C}\ar[r] \ar[d]^{r} & \omega_{C} \ar[r] \ar[d]^{r} & 0 \\
0 \ar[r] & \mathcal{O}_{Z(\omega_{q})} \ar[r]^{i}\ar[d] & \Omega^{1}_{\mathcal{C}|Z(\omega_{q})} \ar[r]\ar[d] & \omega_{C|Z(\omega_{q})} \ar[r]\ar[d] & 0 \\ 
& 0  & 0 & 0  &
,}
\end{equation}
where $\mathcal{O}_{Z(\omega_{q})}\simeq\bigoplus_{i=1}^{g-1}\mathbb{C}_{p_{i}}\oplus \bigoplus_{i=1}^{g-1}\mathbb{C}_{q_{i}}$.
It gives the following diagram in cohomology
\[
\xymatrix{
\mathbb{C} \ar[r]\ar[d]^{r} & H^{0}(C,\Omega^{1}_{\mathcal{C}|C}) \ar[r] \ar[d]^{r} & H^{0}(C,\omega_{C})  \\
\mathbb{C}^{2g-2} \ar[r]^{\hspace{-1cm}i} & H^{0}(Z(\omega_{q}),\Omega^{1}_{\mathcal{C}|Z(\omega_{q})})  & &
.}
\]
It is clear that $r(\hat{\omega}_{q})=i((g_{1}(p_{1}),\cdots,g_{g-1}(p_{g-1}),\tilde{g}_{1}(q_{1}),\cdots,\tilde{g}_{g-1}(q_{g-1})))$. 
Note that the association $\omega_{q}\mapsto (g_{1}(p_{1}),\cdots,g_{g-1}(p_{g-1}),\tilde{g}_{1}(q_{1}),\cdots,\tilde{g}_{g-1}(q_{g-1}))\in\mathbb{C}^{2g-2}$
 depends on the choice of a lifting $\hat{\omega}_{q}$, so it is well defined modulo $r(\mathbb{C})$.
\\
Under the isomorphism $F^{*}\simeq T_{\mathcal{C}|C}$ diagram (\ref{big}) becomes
\begin{equation}
\label{big'}
\xymatrix{
& 0 \ar[d] & 0\ar[d] & 0 \ar[d] & \\
0 \ar[r] & T_{C} \ar[r]\ar[d]^{\omega_{q}} & F^{*} \ar[r]\ar[d]^{\omega_{q}} & \mathcal{O}_{C} \ar[r]\ar[d]^{\omega_{q}} & 0 \\
0 \ar[r] & \mathcal{O}_{C} \ar[r]^{(s,t)}\ar[d]^{r} & F \ar[r]^{\binom{t}{-s}} \ar[d]^{r} & \omega_{C} \ar[r] \ar[d]^{r} & 0 \\
0 \ar[r] & \mathcal{O}_{Z(\omega_{q})} \ar[r]^{i}\ar[d] & F_{|Z(\omega_{q})} \ar[r]\ar[d] & \omega_{C|Z(\omega_{q})} \ar[r]\ar[d] & 0 \\ 
& 0  & 0 & 0  &
}
\end{equation}

In cohomology $\omega_{q}=st$ lifts to $(s,0)\in H^{0}(C,F)$. It restricts to $(s_{|Z(\omega_{q})},0)\in H^{0}(Z(\omega_{q}),F_{|Z(\omega_{q})})$, 
which lifts to $(0,\cdots,0,1,\cdots,1)\in\mathbb{C}^{g-1}\oplus\mathbb{C}^{g-1}$. 
\\ So in this case we have that $\omega_{q}\mapsto (0,\cdots,0,1,\cdots,1)\in\mathbb{C}^{2g-2}$.
\\
Compairing with what we found above we conclude \[
(g_{1}(p_{1}),\cdots,g_{g-1}(p_{g-1}),\tilde{g}_{1}(q_{1}),\cdots,\tilde{g}_{g-1}(q_{g-1}))=(0,\cdots,0,1,\cdots,1)\textrm{ mod }r(\mathbb{C}),\] 
which is the thesis.
\end{proof}
Lemma \ref{3.7} together with (\ref{formula}) completes the proof of the Proposition.
\end{proof} 
\begin{cor}\label{lb} The infinitesimal invariant reconstructs the couple $(C,L)$.
\end{cor}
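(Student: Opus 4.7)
The plan is to apply Theorem \ref{i.i.} to first recover, as an embedded subvariety, the birational model $\phi(C) \subset \mathbb{P}^{3}$, and then to read off the pair $(C,L)$ from this embedding.

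The construction of Sections \ref{form}--\ref{symm} gives a morphism $Q' \to X$, $q \mapsto ((\xi_{q}),(\omega_{q}))$. This morphism is injective: the second coordinate $\omega_{q}$ is (the pullback of) the linear form defining the hyperplane $H_{q} \subset \mathbb{P}^{3}$ containing the two rulings of $Q$ through $q$, and $H_{q}$ in turn determines $q$ via the Gauss map of the smooth quadric $Q$. Under this identification, Theorem \ref{i.i.} computes the zero locus of the section $\delta\nu(0)$ in (\ref{sec}) on $X$: it is exactly $\phi(C)_{sm}$. Taking the Zariski closure of this zero locus inside $Q \subset \mathbb{P}V^{\ast} \cong \mathbb{P}^{3}$ we recover the embedded curve $\phi(C) \subset \mathbb{P}^{3}$.

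Once $\phi(C)$ is known as a subvariety of $\mathbb{P}^{3}$, the curve $C$ is obtained as its normalization, since $\phi:C \to \phi(C)$ is birational \cite[prop.~8.33 p.~834]{ar2}. To recover $L$, note that the two families of rulings of $Q$ cut out on $\phi(C)$ two base-point-free pencils on $C$: by Remark \ref{0}.2, for a ruling $l_{1}$ of the first family one has $\phi^{\ast}(l_{1|\phi(C)}) = Z(s) \in |L|$, and symmetrically the other family yields a divisor in $|\omega_{C}\otimes L^{-1}|$. This determines $L$ up to the natural involution $L \leftrightarrow \omega_{C} \otimes L^{-1}$, which is an unavoidable symmetry of the construction since $V$, the quadric $Q$, and the normal function $\nu$ all depend only on the unordered pair $\{L,\omega_{C}\otimes L^{-1}\}$.

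The main step is therefore Theorem \ref{i.i.}; the corollary itself requires no further delicate argument beyond observing that the vanishing/non-vanishing dichotomy of $\delta\nu(0)$ intrinsically carves $\phi(C)$ out of $Q$, after which the reconstruction of $(C,L)$ from a birational model equipped with its two rulings is classical. In particular, for $g=4$ the curve $\phi(C)$ coincides with the canonical image of $C$, so one recovers Griffiths' generic Torelli theorem as a special case.
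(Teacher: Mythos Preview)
Your proposal is correct and follows essentially the same approach as the paper: identify $q\mapsto(\omega_q)$ with the Gauss map of $Q$ (equivalently, the duality $Q\cong Q^{*}$), apply Theorem~\ref{i.i.} to see that the zero locus of $\delta\nu(0)$ on $X$ is $\phi(C)_{sm}$, take the closure to recover $\phi(C)\subset\mathbb{P}^{3}$, and then read off $C$ by birationality and $L$ from the rulings via Remark~\ref{0}. Your added caveat that $L$ is a~priori recovered only up to the involution $L\leftrightarrow\omega_{C}\otimes L^{-1}$ is a fair refinement that the paper's proof passes over in silence.
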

\begin{proof}
Recall that, by construction, the hyperplane $\{\omega_{q}=0\}\subset \mathbb{P}V^{*}$ is the projective tangent space $T_{Q,q}$ of 
$Q=\{\omega_{00}\omega_{11}-\omega_{01}\omega_{10}=0\}$ at the point $q$
(see Section \ref{form}).
\\
If we consider $\mathbb{P}V$ as the space of hyperplanes of $\mathbb{P}V^{*}$,
then $\{\omega_{q} \ | \ q\in Q\}\cong\{T_{Q,q} \ | \ q\in Q \}=Q^{*}\subset\mathbb{P}V$, the dual of the quadric $Q$.
It is well known that $Q^{*}\cong Q$ and then $\{\omega_{q} \ | \ q\in Q\}\cong Q\subset\mathbb{P}^{3}$.
Let us define $Z:=\mbox{$\{((\xi_{q}),(\omega_{q}))\in X \ | \ \delta\nu(0)(\xi_{q}\otimes\omega_{q})=0)\}$}$ the zero locus of the map (\ref{sec}). 
It follows from Proposition \ref{i.i.} that $\pi_{2}(Z)=\phi(C)_{sm}\subset\mathbb{P}^{3}$ and then taking the closure we get $\overline{\pi_{2}(Z)}=\phi(C)$.
Therefore $\delta\nu(0)$ reconstructs $\phi(C)$ and, since $\phi$ is birational onto $\phi(C)$, it also \mbox{reconstructs $C$.}

To recover $L$, it suffices to recall that by remark \ref{0} we have $\phi^{*}(l_{1|\phi(C)})=Z(s)$ (notations as in section \ref{form}), 
and that obviously $\mathcal{O}_{C}(Z(s))\cong L$.
\end{proof}
\begin{oss}
When $g=4$ we have that  $\phi(C)$ is the canonical image of $C$. We thus obtain Griffiths' result \cite[pp. 298-302]{grif}.
\end{oss}
\textbf{Acknowledgements.} I would like to thank Prof. G.P. Pirola for having suggested me the problem and for his support in this work. 
\\ 
It is a pleasure to thank Prof. A. Collino. He pointed out an inaccuracy in the first version of the paper and also suggested how to correct it.

\noindent Emanuele\ Raviolo\\
Dipartimento di Matematica, Universit\`a di Pavia \\
via Ferrata 1 \\
 27100 Pavia - Italy \\
 emanuele.raviolo@unipv.it


\begin{thebibliography}{8}
\bibitem[ACGH85]{ar}
E. Arbarello, M. Cornalba, P. Griffiths, J. Harris, Geometry of algebraic curves vol. I, Springer-Verlag, New York, 1985.
\bibitem[ACG11]{ar2}
E. Arbarello, M. Cornalba, P. Griffiths, Geometry of algebraic curves vol. II, Springer-Verlag, Berlin Heidelberg, 2011.
\bibitem[CP95]{cp}
A. Collino, G.P. Pirola, The Griffiths infinitesimal invariant for a curve in its Jacobian, \textit{Duke Math. Journal}, vol.78, No.1, 1996, pp. 59-88.
\bibitem[C83]{ceresa}
G. Ceresa, $C$ is not algebraically equivalent to $C^{-}$ in its jacobian, \textit{Ann. of Math.}, vol. 117, No. 2, 1983, pp. 285-291.
\bibitem[G89]{g}
M. Green, Griffiths' infinitesimal invariant and the Abel-Jacobi map, \textit{J. Differential Geometry}, 29, 1989, pp. 545-555.
\bibitem[Grif83]{grif}
P. Griffiths, Infinitesimal variations of Hodge structures III: Determinantal varieties and the infinitesimal invariant of normal functions, \textit{Compo. Math.}, 50, 1983, pp. 267-324.
\bibitem[Voi88]{voi}
C. Voisin, Une remarque sur l'invariant infinit\`esimal des fonctions normales, \textit{C. R. Acad. Sci. Paris}, t. 307, S\`erie I, pp.157-160, 1988.
\bibitem[Voi03]{voi2}
C. Voisin, Hodge theory and complex algebraic geometry II, Cambridge Studies in Advanced Mathematics, 77, Cambridge University Press, Cambridge, 2003.
\end{thebibliography}
\end{document}